\renewcommand{\leq}{\leqslant}
\renewcommand{\geq}{\geqslant}
\DeclareFixedFont{\beaupetit}{T1}{ftp}{b}{n}{2cm}
\newtheorem{theorem}{Theorem}[]
\newtheorem{definition}{Definition}[]
\newtheorem{prop}[]{Proposition}
\newtheorem{lemma}[]{Lemma}
\theoremstyle{definition}
\newtheorem*{remark}{Remark}
\title{{\bf Surprising identities for the greedy independent set on Cayley trees}} \author{Alice Contat\footnote{Universit\'e Paris-Saclay E-mail: \texttt{alice.contat@universite-paris-saclay.fr} } } 
\begin{document}
\date{}            
         \maketitle

\begin{abstract}
We prove a surprising symmetry between the law of the size $G_n$ of the greedy independent set on a uniform Cayley tree $ \mathcal{T}_n$ of size $n$ and that of its complement. We show that $G_n$ has the same law as the number of vertices at even height in $ \mathcal{T}_n$ rooted at a uniform vertex. This enables us to compute the exact law of $G_n$. We also give a Markovian construction of the greedy independent set, which highlights the symmetry of $G_n$ and whose proof  uses a new Markovian exploration of \emph{rooted} Cayley trees which is of independent interest.
\end{abstract}
  
\begin{figure}[!h]
 \begin{center}
 \includegraphics[width=11cm]{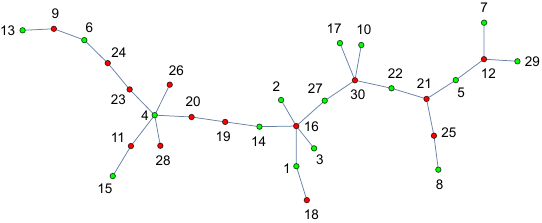}
 \caption{Example of the greedy independent set obtained on a tree of size $30$. The labels represent the order in which vertices are inspected in the construction of the greedy independent set. The green vertices are the active vertices whereas the red vertices are the blocked vertices.\label{fig:exgreedy} }
 \end{center}
 \end{figure}

\section{Introduction}

An \emph{independent} set of a graph $ \mathcal{G} = (V,E)$ is a subset of vertices where no pair of vertices are connected to each other. Finding an independent set of maximal size is a notoriously difficult problem in general \cite{frieze1997algorithmic}. However, using a greedy procedure, we can construct a \emph{maximal} (for the inclusion order) independent set by inspecting the vertices of the graph one by one in a random order, adding the current vertex and blocking its neighbours if it is not connected to any previously added vertex.
More precisely, the vertices are divided in three possible statuses: the undetermined vertices $ \mathcal{U}_k$, the active vertices $ \mathcal{A}_k$ and the blocked vertices $ \mathcal{B}_k$. Initially, we start with $\mathcal{U}_0 = V$ and $ \mathcal{A}_0 = \mathcal{B}_0 = \emptyset$. At step $k \geq 1$, we choose an undetermined vertex $v_k$ uniformly at random, change its status to active and change the status of all its undetermined neighbours to blocked. We stop at  $ \tau = \min \{ k \geq 0, \ \mathcal{U}_k = \emptyset\}$. Note that at each step $k$, no vertices of  $ \mathcal{A}_k$ are neighbours and  $ \mathcal{A}_{\tau}$ is a maximal independent set, which we call the (random) \emph{greedy} independent set, see Figure \ref{fig:exgreedy}. \\
Of course the independent set obtained by the greedy algorithm is usually not \emph{maximum} in the sense that it does not have the maximal possible size. In the case of trees, finding an independent set of maximal size is much simpler than in general. 
However, from a probabilistic or combinatorial point of view the greedy independent set is still worth investigation even on (random) trees.
Greedy independent sets on (random) graphs have been studied extensively with a particular focus on the proportion of vertices of the graph in the greedy independent set called the \emph{greedy independence ratio} or \emph{jamming constant}. Recently, Krivelevich, Mészáros, Michaeli and Shikelman \cite{krivelevich2020greedy} used Aldous' objective method \cite{AS04} to prove under mild assumptions that if a sequence of random finite graphs with a root vertex chosen uniformly at random converges locally, then the sequence of their greedy independence ratios also converges in probability.  \\
Recall that a \emph{Cayley tree} of size $n$ is an unrooted and unordered tree over the $n$ labeled vertices $\{1, \dots , n\}$ and we let $ \mathcal{T}_n$ be a random Cayley tree sampled uniformly at random among the $n^{n-2}$ Cayley trees of size $n$. We shall denote by $ \mathcal{T}_n^{\bullet}$ the rooted tree obtained from $ \mathcal{T}_n$ by distinguishing a vertex uniformly at random.
Using the local limit of $ \mathcal{T}_n^{ \bullet}$ given by Kesten's infinite tree, Krivelevich, Mészáros, Michaeli and Shikelman \cite[Section 6.3]{krivelevich2020greedy} proved the ``intriguing fact" that the asymptotic greedy independence ratio of uniform Cayley trees is $1/2$. Meir and Moon proved in \cite{meir1973expected} that the size of a maximum independent set of a uniform Cayley tree concentrates around $ \rho n$ where $\rho \approx 0.5671$ is the unique solution of $x e^x = 1$.\\
In this note we prove a much stronger, and perhaps surprising statement concerning the size of the greedy independent set on a uniform Cayley tree showing that it has (almost) the same law as that of its complement! We denote by $G_n$ the size of the greedy independent set $ | \mathcal{A}_{\tau}|$ on a uniform Cayley tree $ \mathcal{T}_n$ and  $H_n$ the number of vertices at even height in $ \mathcal{T}_n^{ \bullet}$. Our first observation is that $G_n$ has the same law as $H_n$, which enables us to compute the exact law of $G_n$. 

\begin{theorem}\label{thm:height} The size $G_n$ of the greedy independent set on $ \mathcal{T}_n$ has the same law as the number $H_n$ of vertices at even height in $ \mathcal{T}_n^{ \bullet}.$  
As a consequence, for $1 \leq k \leq n-1$, 
\begin{equation} \label{eq:1} \mathbb{P} (G_n = k) =\mathbb{P}(H_n = k) = \binom{n}{k} \frac{k^{n-k} (n-k)^{k-1}}{n^{n-1}}.\end{equation}
\end{theorem}

The proof of Theorem \ref{thm:height} relies on the invariance of Cayley trees under rerooting at a uniform vertex. The exact computation of the law of $H_n$ is a consequence of a result of \cite{feray2018geometry} on bi-type alternating Galton--Watson trees. This equality in distribution of $G_n$ and $H_n$ suggests that their common law is almost symmetric with respect to $n/2$. But $H_n$ (as well as $G_n$) has a little drift caused by the root vertex of $ \mathcal{T}_n^{ \bullet}$ which is always at even height. Indeed, it follows from Theorem \ref{thm:height} that $G_n/n$ converges in probability to $1/2$ (thus, recovering \cite[Section 6.3]{krivelevich2020greedy}) and we also have a local central limit theorem for $ G_n$: for all $ A \geq 0$, 
\begin{equation}\label{eq:tclloc}  \mathbb{P}  \left( G_n  =  \left\lfloor \frac{n}{2} + x \sqrt{n} \right\rfloor \right) \underset{n \to \infty}{\sim} \frac{1}{\sqrt{n}} \frac{1}{\sqrt{2 \pi (1/4)^2}} \exp \left(- \frac{x^2}{2 \cdot (1/4)^2}\right) \end{equation}
uniformly for $ x \in [-A,A]$. 
We also give a ``Markovian" construction of the greedy independent set which brings to light this symmetry of $G_n$. 
\begin{theorem} \label{thm:sym}
There exists a random variable $ \mathcal{E}_n$ with values in $\{ 0,1\}$ such that we have 
$$ G_n \stackrel{(d)}{=} (n- G_n) + \mathcal{E}_n.$$
Moreover $\mathbb{P} ( \mathcal{E}_n = 1 ) \to 1/4$ as $n$ goes to $\infty$.

\end{theorem}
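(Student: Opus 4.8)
The plan is to pin down the law of $G_n$ accurately enough, via an exploration that runs the greedy procedure while sequentially revealing the tree. Two elementary observations come first. Taking expectations in $G_n\stackrel{(d)}{=}(n-G_n)+\mathcal{E}_n$ forces $\P(\mathcal{E}_n=1)=2\,\E[G_n]-n$ for \emph{any} admissible $\mathcal{E}_n$, so the final assertion is equivalent to $\E[G_n]=\tfrac n2+\tfrac18+o(1)$. And writing $b_k=\P(G_n=k)$, a short computation shows that an $\mathcal{E}_n\in\{0,1\}$ with $G_n\stackrel{(d)}{=}(n-G_n)+\mathcal{E}_n$ exists if and only if $0\le \P(G_n>n-j)-\P(G_n<j)\le b_j$ for all $j$, that is, if and only if $G_n-1\preceq_{\mathrm{st}}n-G_n\preceq_{\mathrm{st}}G_n$, in which case one may take $\P(\mathcal{E}_n=1\mid G_n=j)=b_j^{-1}\bigl(\P(G_n>n-j)-\P(G_n<j)\bigr)$. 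So the theorem splits into this stochastic interlacing (the ``surprising symmetry'') and a first-moment asymptotic.

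\textbf{The first moment.} Here one can be fully explicit. A uniform Cayley tree rooted at a uniform vertex is a Poisson$(1)$ Galton--Watson tree $\mathcal{T}$ conditioned on $|\mathcal{T}|=n$, and since the greedy procedure ignores the root, $\E[G_n]=n\,\P(\varrho_n\in\mathcal{A}_\tau)$ (with $\varrho_n$ a uniform vertex of $\mathcal{T}_n$) equals $n\,[y^n]\mathcal{G}(y)/[y^n]T(y)$, where $T(y)=y\,e^{T(y)-1}$ and $\mathcal{G}(y)=\E[y^{|\mathcal{T}|}\mathbf{1}_{\{\text{root of }\mathcal{T}\text{ active}\}}]$. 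Running the greedy via i.i.d.\ uniform clocks (process vertices in increasing clock order), the root is active iff no child with a smaller clock is active in its own subtree; conditioning on the root's clock and using the branching property gives a one-variable ODE for $\mathcal{G}$ whose solution is simply
\[
\mathcal{G}(y)=\log\bigl(1+T(y)\bigr).
\]
Lagrange inversion then yields $\E[G_n]=n\sum_{j=0}^{n-1}(-1)^j\prod_{i=1}^{j}\bigl(1-\tfrac in\bigr)=\tfrac{n!}{n^{n-1}}(-1)^{n-1}\sum_{j=0}^{n-1}\tfrac{(-n)^j}{j!}$, and an Euler--Boole summation — using $\prod_{i\le j}(1-i/n)=\Gamma(n)/(n^j\Gamma(n-j))$ together with $\psi(n)=\log n-\tfrac1{2n}+O(n^{-2})$ — gives $\E[G_n]=\tfrac n2+\tfrac18+o(1)$, hence $\P(\mathcal{E}_n=1)\to\tfrac14$. (The leading $\tfrac n2$ matches the probability $\int_0^1(1+s)^{-2}\,ds=\tfrac12$ that the root of Kesten's tree, the local limit of $\mathcal{T}_n$ at a uniform vertex, is active, obtained from the same clock computation run on the infinite tree; the $\tfrac18$ is the $O(1/n)$ finite-size correction coming from the conditioning $|\mathcal{T}|=n$.)

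\textbf{The symmetry.} For the interlacing I would run the greedy while exploring the rooted Cayley tree, uncovering the edges incident to a vertex precisely when that vertex is declared active. The crucial structural fact to establish is that uniform Cayley trees are preserved by this exploration: conditionally on the uncovered part, the undetermined region is a \emph{uniform} rooted forest, so the exploration becomes a Markov chain whose state records only the number of undetermined vertices and the number of ``boundary slots'' (undetermined vertices adjacent to a determined one), with transition probabilities read off from the enumeration $r\,m^{m-r-1}$ of rooted forests on $m$ vertices with $r$ prescribed roots. Along a trajectory of this chain, every step either creates one new active vertex or creates some new blocked vertices, and the number of the former is $G_n$, while the two kinds of steps together account for all $n$ vertices. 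I would then seek a reversal symmetry of this chain matching active-creating steps with blocked-creating steps — an involution on trajectories that swaps the roles of $\mathcal{A}_\tau$ and its complement — which would turn $G_n$ into $n-G_n$. Such a matching cannot be exact: one step is left unmatched, coming from the status of the root (equivalently the final step before the forest is exhausted), and this discrepancy is exactly the $\{0,1\}$-valued $\mathcal{E}_n$. The main obstacle, and the heart of the proof, is the structural fact: showing the exploration stays in this one-parameter family of conditional laws after each greedy step — in particular accommodating that activating an interior vertex blocks its parent as well as its children — and then exhibiting the reversal symmetry of the transition kernel. Granting this, $G_n-1\preceq_{\mathrm{st}}n-G_n\preceq_{\mathrm{st}}G_n$ follows, and together with the reduction and the first-moment estimate above it gives the theorem.
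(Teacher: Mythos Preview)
Your reduction is clean and correct, and the generating-function computation of $\E[G_n]$ via $\mathcal{G}(y)=\log(1+T(y))$ is an interesting alternative to the paper's fluid-limit argument for the asymptotic $\P(\mathcal{E}_n=1)\to\tfrac14$. If the expansion $\E[G_n]=\tfrac n2+\tfrac18+o(1)$ is carried out correctly, this part stands on its own.

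The gap is in the symmetry itself, which is the substance of the theorem. You describe an exploration that ``uncovers the edges incident to a vertex precisely when that vertex is declared active'', then assert that a reversal symmetry of the resulting chain will match active-creating steps with blocked-creating steps. But in that exploration, activating a vertex blocks a \emph{variable} number of neighbours in one step, so active and blocked do not play symmetric roles at the level of the transition kernel; your sentence ``every step either creates one new active vertex or creates some new blocked vertices'' is already inconsistent with this description. The trajectory involution you allude to is neither exhibited nor clearly well-defined, and you explicitly grant the structural fact that is ``the heart of the proof''. As written, the interlacing $G_n-1\preceq_{\mathrm{st}}n-G_n\preceq_{\mathrm{st}}G_n$ is asserted, not proved.

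The paper's move is different and removes the difficulty entirely: root the tree at $n$ and, when inspecting the next undetermined vertex $v_i$, reveal \emph{only the edge to its parent} $w_i$, updating the status of $v_i$ (and possibly $w_i$) but never touching $v_i$'s children. With this one-edge-at-a-time peeling, the five-tuple $(U_i^n,A_i^{n,w},B_i^{n,w},A_i^{n,b},B_i^{n,b})$ is a Markov chain whose transition table is \emph{literally invariant} under swapping $(A^{n,w},A^{n,b})\leftrightarrow(B^{n,w},B^{n,b})$, except at the single step where the undetermined set equals $\{n\}$. Hence $(A^n_{\theta_n},B^n_{\theta_n})$ and $(B^n_{\theta_n},A^n_{\theta_n})$ have the same law on the complement of that event, and $\mathcal{E}_n$ is concretely the indicator that the root $n$ is the last undetermined vertex. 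No reversal or involution on trajectories is needed; the symmetry is built into the kernel once you peel only towards the parent. The paper then gets $\P(\mathcal{E}_n=1)=\E[(1-2/n)^{\theta_n-1}]\to e^{-2\ln 2}=\tfrac14$ from the fluid limit $\theta_n/n\to\ln 2$, whereas your first-moment route would reach the same number by a different path.
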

This symmetry between $G_n$ and $n-G_n$ is striking because the geometry of a greedy independent set and that of its complement are totally different (see Figure \ref{fig:exgreedy}). 
The main idea for the proof of Theorem \ref{thm:sym} is to consider the underlying tree as ``unknown" and to discover it as we build the greedy independent set. See \cite{bermolen2017jamming,brightwell2017greedy,jonckheere2021asymptotic} for similar applications of this technique for other random graph models. In our case, we develop in Section \ref{sec:peeling} a new type of Markovian explorations of uniform Cayley trees which is inspired by Pitman's famous algorithm \cite{pitman1999coalescent} but is more \emph{flexible} in the sense that we can choose which vertex to explore at each step of the procedure. By choosing the next vertex to explore as the next vertex inspected by the greedy algorithm, we connect these Markovian explorations with the construction of the greedy independent set.  We expect these explorations to be applicable to a wider range of contexts, e.g.~we will shed new light on Aldous-Broder algorithm \cite{Ald90,broder1989generating} and Pitman's construction \cite{pitman1999coalescent} of $ \mathcal{T}_n$ using particular cases of our Markovian explorations (see Section \ref{sec:peeling}). Similar explorations are used in a forthcoming work \cite{contat2021parking} on the parking process on Cayley trees. 
\paragraph{Independent sets of edges.} Instead of an independent set of vertices, we could have considered an independent set of edges or a \emph{matching}, that is a set of edges in which no pair of elements have a vertex in common. As for the vertices, we can construct a maximal independent set of edges greedily by inspecting the edges one by one in a uniform random order, keep it if our edge set stays independent and stop once we have inspected all the edges. Denoting by $M_n$ the number of edges kept after applying this algorithm on a uniform Cayley tree $ \mathcal{T}_n$ of size $n$, our Markovian exploration allows us to show that $M_n/n$ concentrates around $3/8$ and to obtain a Central Limit Theorem for $M_n$. We do not include the details here since this  result has already been shown by Dyer, Frieze and Pittel in \cite[Theorem 2]{dyer1993average} by other means.\\
But in the case of plane trees i.e.\ on rooted and ordered unlabeled trees, the invariance under rerooting at a uniform edge plays the same role as the invariance under rerooting at a uniform vertex above and enables us to compute the exact law of the size of the greedy independent set of edges. More precisely, we let $ \widetilde{\mathcal{T}}_n$ be a uniform plane tree i.e.\ rooted and ordered (unlabeled) tree of size $n$ (i.e.\ with $n$ vertices) and let $ \widetilde{M}_n$ be the size of the greedy independent set of edges obtained on $ \widetilde{\mathcal{T}}_n$.
\begin{theorem}\label{thm:plane} The size  $ \widetilde{M}_n$ of the greedy independent set of edges on $ \widetilde{ \mathcal{T}}_n$ has law given by
$$ \mathbb{P} ( \widetilde{M}_n = k) = \dfrac{n}{k} \dfrac{\binom{n+k-2}{2k-1}\binom{n-k-1}{k-1}}{\binom{2n-2}{n-1}} = \frac{n!(n-1)!(n+k-2)!}{k!(2k-1)!(n-2k)!(2n-2)!},$$
for $1 \leq k \leq \lfloor n/2 \rfloor$. As a consequence, for all $ A \geq 0$,
$$ \mathbb{P} \left( \widetilde{M}_n  =  \left\lfloor \frac{n}{3} + x \sqrt{n} \right\rfloor \right) \underset{n \to + \infty}{\sim} \frac{1}{\sqrt{n}} \frac{1}{\sqrt{2 \pi (2/9)^2}} \exp \left(- \frac{x^2}{2 \cdot (2/9)^2}\right) $$
uniformly for $ x \in [-A,A]$. 
\end{theorem}
\noindent We prove this theorem at the end of Section \ref{sec:height}. \\

\noindent Peleg Michaeli has recently informed us that the formula \ref{eq:1} was also independently proved by Alois Panholzer in 2020 in \cite{panholzer2020combinatorial}. 

\noindent  \textbf{Acknowledgments.} The author is very grateful to Mireille Bousquet-Mélou, Vincent Delecroix and Philippe Duchon for enlightening discussions during the 2021 Aléa days. In particular, we are indebted to Mireille Bousquet-Mélou for first computing the law of $G_n$ in Theorem \ref{thm:sym}. We thank Igor Kortchemski for a discussion about bicolored trees. We also thank Matthieu Jonckheere for stimulating discussions and  Nicolas Curien for his constant guidance. I also thank the two anonymous referees for their careful reading and their valuable suggestions that helped clarifying the paper, as well as Peleg Michaeli for pointing \cite{panholzer2020combinatorial} and catching shortcomings in the first version of the paper.

\section{Greedy independent sets and bicolored trees}\label{sec:height}
In this section, we prove Theorems \ref{thm:height} and \ref{thm:plane} by relating the construction of greedy independent sets to bicolored Galton--Watson trees with alternating colors. Our main tool will be invariance of the underlying random tree with respect to rerooting at a uniform vertex or edge. 
\subsection{Independent set of vertices for Cayley trees.}
The following lemma is well known (see for instance \cite[End of Section 1.5]{LG05}) and implies in particular the invariance of a uniform Cayley tree under independent uniform relabeling of the vertices. 
\begin{lemma}\label{lem:invariance}
If $ \mathscr{T}_n$ is a Galton--Watson plane tree with $ \mathrm{Poisson}(1)$ offspring distribution conditioned to have $n$ vertices, then the tree $ \mathcal{T}_n^{\bullet}$ obtained by labeling its vertices by $ \{ 1 , \dots , n\}$ uniformly at random, forgetting the planar ordering but keeping the root vertex, is a uniform rooted Cayley tree.
\end{lemma}
\noindent In particular, if  $ \sigma_n$ is a uniform permutation of $ \{ 1, 2, \dots , n\}$ independent of $ \mathcal{T}_n$, then the tree $\mathcal{T}^{\sigma_n}_n$ obtained by relabeling the vertex $i$ of $ \mathcal{T}_n$ by $ \sigma_n (i)$ for $ 1 \leq i \leq n$ is still a uniform Cayley tree. In the rest of this section, we shall always consider that $ \mathcal{T}_n$ and $ \mathcal{T}_n^{\bullet}$ are built from $ \mathscr{T}_n$ as above.
\begin{remark} By invariance under rerooting, the tree $ \mathcal{T}_n^{ \bullet}$ has the same law, seen as unlabeled rooted tree, as the tree $ \mathcal{T}_n$ rooted at any deterministic vertex $ i \in \{ 1, \dots , n \}$. In the next section, we shall always suppose that our Cayley trees are rooted at the vertex with label $n$, but in this section, it is better to think of them as rooted on a uniform vertex.
\end{remark}
\noindent We decompose the proof of Theorem \ref{thm:height} in two parts. We first prove that $G_n$ has the same law as $ H_n$ and then compute explicitly the law of $H_n$ using bi-type alternating Galton-Watson trees.

\begin{proof}[Proof of the first half of Theorem \ref{thm:height}] The proof of this lemma relies on Lemma \ref{lem:invariance}. We will show that $G_n$ and $H_n$ obey the characteristic same recursive distributional equation. \\
Let us start with $H_n$. 
Recall that $ \mathcal{T}_n^{ \bullet}$ is built from the conditioned Galton–Watson plane tree $ \mathscr{T}_n$ by assigning uniform labels, keeping the root vertex and forgetting about the plane ordering. 
If we denote by $K \geq 0$ the number of vertices of height $2$ in $ \mathscr{T}_n$, by $T_1, \dots , T_K$  the plane trees (ordered from left to right) attached to theses vertices in $ \mathscr{T}_n$, by $N_1, \dots , N_K$ their respective sizes (see Figure \ref{fig:rec}), then by the Markov branching property of the Galton--Watson measure \cite{HM12}, conditionally on $( K, N_1, \dots , N_K)$, the plane rooted trees $(T_1 , \dots , T_K)$ are independent  Galton--Watson plane trees with $ \mathrm{Poisson}(1)$ offspring distribution conditioned to have sizes $(N_1, \dots , N_K)$.
Since the number of vertices at even height in $ \mathscr{T}_n$ is just $1$ (for the root vertex) plus the sum of the number of vertices at even height in every tree $ T_i$ for $ 1 \leq i \leq K$, it follows that
\begin{equation} \label{eq:rdeh} H_{n} \stackrel{(d)}{=} 1 + \sum_{i = 1}^{K} H_{N_i}^{(i)},
\end{equation}
where $(H_{j}^{(i)} : 1 \leq i,j)$ are independent variables also independent of $(K, N_1, \dots , N_K)$  and $H_{j}^{(i)}$ has law $H_j$ for every $ i,j \geq 1$.\\
\begin{figure}[!h]
 \begin{center}
 \includegraphics[width=13cm]{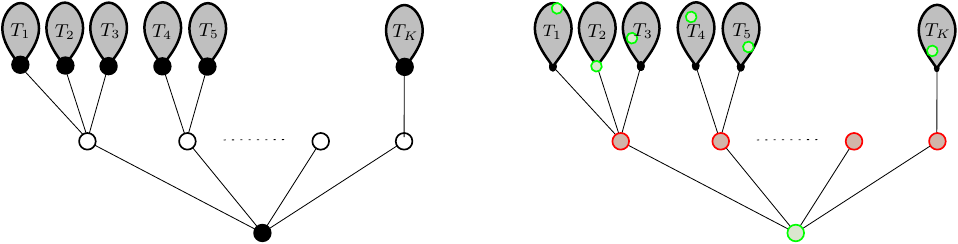}
 \caption{On the left, an illustration of the recursive equation in law for  the number $H_n$ of vertices at even height on $ \mathscr{T}_n$. In black at the bottom, the root vertex, in white its neighbours, and the next vertices to include are the black roots in each tree $T_i$. 
On the right, an illustration of the first step of the greedy algorithm. With our coupling with $ \mathscr{T}_n$, the first vertex that we add in the greedy independent set is the root vertex of $ \mathscr{T}_n$ (in green at the bottom and in red, its neighbours). The next vertices (in green) to inspect in each $T_i$ are then not necessarily the root vertices of $T_i$.
   \label{fig:rec} } 
 \end{center}
 \end{figure} \\ 
Let us now move on to the size of the greedy independent set. By construction, it is built first by including a uniform vertex $V$ with label in $ \{ 1, \dots , n\}$ and  blocking  its neighbours (which are the vertices at distance $1$ from the vertex $V$). We can assume that this vertex is actually the root vertex of $ \mathcal{T}_n^{\bullet}$ (or equivalently of $ \mathscr{T}_n$) since it is a uniform vertex of $ \{ 1, \dots , n\}.$ Using the same notation as above (see Figure \ref{fig:rec}), the crucial observation is that the greedy independent set is obtained by joining the existing root vertex of $ \mathscr{T}_n$ together with the independent sets obtained by applying the greedy algorithm on the trees $ T_1, \dots , T_K$ independently. The difference with the case of $H_n$ above is that, in each tree $T_i$, the next vertex to inspect is not necessarily the root of $ T_i$ (induced by $ \mathscr{T}_n$)  but a ``new" uniform vertex of $ T_i$ (see Figure \ref{fig:rec}). But by invariance of uniform Cayley trees under uniform rerooting, we still have
\begin{equation} \label{eq:rdeg}  G_{n} \stackrel{(d)}{=} 1 + \sum_{i = 1}^K G_{N_i}^{(i)}
\end{equation}  
where $(G_{j}^{(i)} : 1 \leq i,j)$ are independent variables also independent of $(K, N_1, \dots , N_K)$ and $G_{j}^{(i)}$ has law $G_j$ for $ i,j \geq 1$.\\
Moreover, the equations \eqref{eq:rdeh} and \eqref{eq:rdeg} characterize the law of $H_n$ and $G_n$ since $ N_1 + \dots + N_K \leq n-1$ almost surely. Hence $G_n$ and $H_n$ have the same law and we get the desired result. 
\end{proof}

\begin{proof}[Proof of the second half of Theorem \ref{thm:height}]By Lemma \ref{lem:invariance}, the variable $H_n$ has the same law as the number of vertices at even height in a Galton--Watson tree with $ \mathrm{Poisson}(1)$ offspring distribution conditioned to have $n$ vertices. To  compute it, we artificially introduce two types of vertices (white vertices and black vertices) and let $ \mathscr{T}_{b}$ be a two-type alternating Galton--Watson tree with a black root and with $\mathrm{Poisson}(1)$ offspring distribution (for both types of vertices), so that all vertices at even height are black and have white children, and all vertices at odd height are white and have black children. We denote by $N_b$ (resp.\ $N_w$)  the number of black (resp.\ white) vertices in $ \mathscr{T}_b$.
Using the result of \cite{chaumont2016coding} and more precisely \cite[Corollary 3.4]{feray2018geometry} and denoting by $S_j$ the sum of $j$  i.i.d.\ random variables with law $ \mathrm{Poisson}(1)$ for $j \geq 1$, so that $S_j$ has law $ \mathrm{Poisson}(j)$, we obtain, for all $ k \geq 1$, 
\begin{eqnarray*}
\mathbb{P}( H_n = k) &=& \mathbb{P}( N_b = k,  N_w =  n-k   | N_b + N_w = n) \\
&=& \frac{n}{k} \frac{ \mathbb{P}(S_k = n-k) \mathbb{P}(S_{n-k} = k-1)}{ \mathbb{P}(S_n = n-1)} \\
&=& 	\binom{n}{k} \frac{k^{n-k}(n-k)^{k-1}}{n^{n-1}}.
\end{eqnarray*}\end{proof}
A straightforward application of Stirling's formula gives Equation \ref{eq:tclloc}. We then easily deduce the local central limit theorem and the law of large numbers.

\subsection{Independent set of edges for plane trees.} As mentioned in the introduction, this construction of the greedy independent set of vertices on uniform Cayley trees, which are invariant under rerooting at a uniform vertex, suggests a similar construction for the greedy independent set of edges on uniform plane trees which are invariant under rerooting at a uniform edge. 
\noindent Recall that we denote by $ \widetilde{\mathcal{T}}_n$ a uniform plane tree of size $n$ (i.e.\ with $n$ vertices).  It can be seen as a graph which is properly embedded in the plane, has only one face of degree $2n-2$, and is rooted at the oriented edge going from the root vertex to its leftmost child (see Figure \ref{fig:planeroot}). The crucial observation is that if, conditionally on $ \widetilde{ \mathcal{T}}_n$, we let $ \vec{e}$ be a uniform oriented edge of $ \widetilde{ \mathcal{T}}_n$, then the tree $ \widetilde{ \mathcal{T}}_n^{ \vec{e}}$ obtained be rerooting the tree $ \widetilde{ \mathcal{T}}_n$ at the edge $ \vec{e}$ is still a uniform plane tree.

\begin{figure}[!h]
 \begin{center}
 \includegraphics[height=4.5cm]{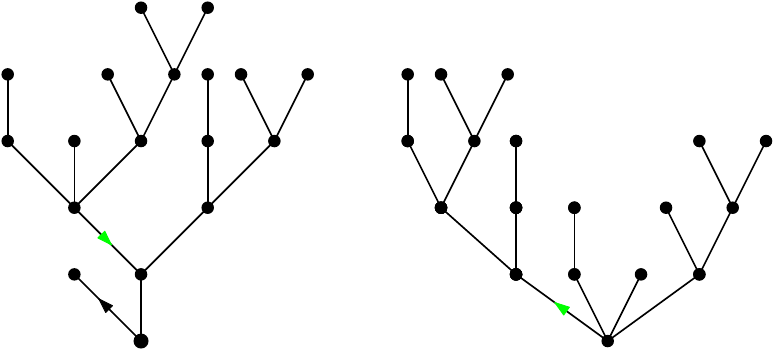}
 \caption{On the left, a plane tree rooted at the black oriented edge. On the right, the usual representation of this plane tree rerooted at the green oriented edge. \label{fig:planeroot}}
 \end{center}
 \end{figure}

Moreover, as uniform Cayley trees, uniform plane trees of size $n$ can be seen as conditioned Galton--Watson trees with the appropriate offspring distribution (see for instance \cite{Ald93}).
\begin{lemma}\label{lem:invPlaneTreee}
A Galton--Watson plane tree with $\mathrm{Geom}(1/2)$ offspring distribution conditioned to have $n$ vertices is a uniform (rooted) plane tree of size $n$, where a random variable $X$ has law $\mathrm{Geom}(1/2)$ if for all $ k \geq 0$, we have $\mathbb{P} (X = k) = 2^{-k-1}. $

\end{lemma}

Using this lemma and the invariance under  rerooting at a uniform oriented edge, we now prove Theorem \ref{thm:plane}.

\begin{proof}[Proof of Theorem \ref{thm:plane}] 
By invariance under rerooting at a uniform oriented edge, we can assume that our greedy algorithm on $ \widetilde{\mathcal{T}}_n$ first includes the root edge, blocks its ``neighbouring edges" i.e.\ the edges adjacent to one of its endpoints (see Figure \ref{fig:plane}, left). We denote by $K_2$ the number of children of the root vertex minus $1$ (or number of brothers of the root edge), by $K_1$ the number of children of the first child of the root, and by $T_1^1, \dots ,T_{K_1}^1$ and $T_1^2, \dots T_{K_2}^2$ the trees attached to theses vertices (ordered from left to right, see Figure \ref{fig:plane}), which have respective sizes $N_1^1, \dots ,N_{K_1}^1$ and $N_1^2, \dots, N_{K_2}^2$. Then, conditionally on $(K_1, K_2, N_1^1, \dots ,N_{K_1}^1, N_1^2, \dots,N_{K_2}^2)$, the trees $(T_1^1, \dots T_{K_1}^1, T_1^2, \dots ,T_{K_2}^2)$ are independent Galton--Watson trees with $ \mathrm{Geom}(1/2)$ offspring distribution conditioned to have respective sizes $(N_1^1, \dots ,N_{K_1}^1, N_1^2, \dots, N_{K_2}^2)$. By the same argument as in the proof of Theorem \ref{thm:height}, we deduce that $ \widetilde{M}_n$ satisfies the following recursive distributional equation 
\begin{equation} \label{eq:rdeplane} \widetilde{M}_{n} \stackrel{(d)}{=} 1 + \sum_{i = 1}^{K_1} \widetilde{M}_{N_i^1}^{(i)}+ \sum_{i = 1}^{K_2} \widetilde{M}_{N_i^2}^{(K_1+i)},
\end{equation}
where $( \widetilde{M}_{j}^{(i)} : 1 \leq i,j)$ are independent variables also independent of $$(K_1, K_2, N_1^1, \dots ,N_{K_1}^1, N_1^2, \dots,N_{K_2}^2)$$ and $ \widetilde{M}_{j}^{(i)}$ has law $ \widetilde{M}_j$ for $1 \leq i,j$. \\
As in the proof of Theorem \ref{thm:height} we interpret the last recursive distributional equation using two-type Galton--Watson trees. Specifically, let $ \mathbb{T}$ be a  two-type alternating Galton--Watson tree, with green and red vertices, with a green root and where the red vertices have Bernoulli of parameter $1/2$ offspring distribution and the offspring distribution of the green vertices is the law of the sum of two independent variables with law $ \mathrm{Geom} (1/2)$. We denote by  $N^g$ (resp.\ $N^r$)  the number of green (resp.\ red) vertices in $ \mathbb{T}$.
Writing $ N^g_{n}$ for the number of green vertices in the tree $ \mathbb{T}$ conditioned to have $n-1$ vertices in total, then $ N^g_{n}$ obeys the same recursive equation \eqref{eq:rdeplane} as $ \widetilde{M}_n$ (see Figure \ref{fig:plane}).

\begin{figure}[!h]
 \begin{center}
 \includegraphics[width=13cm]{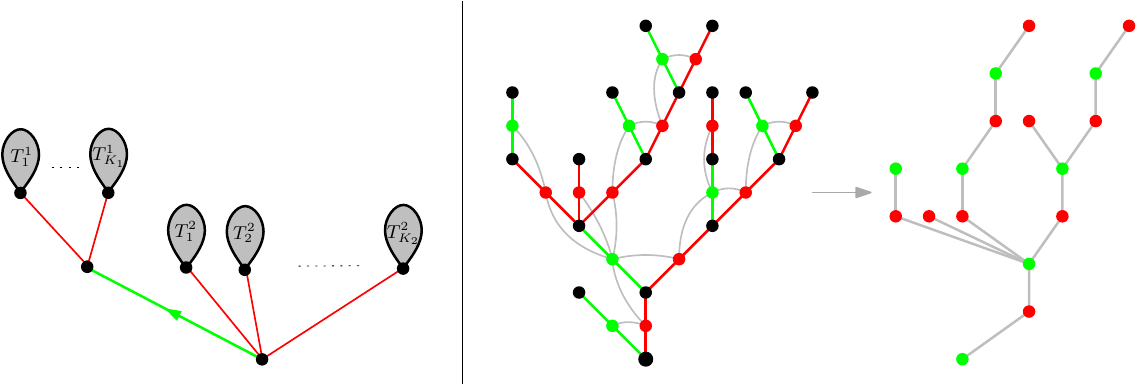}
 \caption{On the left, an illustration of the first step of the greedy algorithm for the independent set of edges. The first edge that we add in the independent set is the green root edge, and we block its neighbouring edges (red). The next edge to inspect is a uniform edge in each tree $T_i^{j}$, but can be taken as the root edge by invariance under rerooting.
 On the right, a plane tree $ \mathbf{t}$ with black vertices and with edges colored as follows: we color the root edge in green, its neighbouring edges in red and reapply the procedure in each tree by taking first the root edge (in green). Its corresponding bi-type alternating plane tree $ \mathbf{t}_g$ is obtained by considering the edges of $ \mathbf{t}$ as the vertices of  $ \mathbf{t}_g$: the children of a green vertex in $ \mathbf{t}_g$ correspond to its children followed by its brothers in $ \mathbf{t}$, and a red vertex has a green child in $ \mathbf{t}_g$ if the red corresponding edge has (at least) a child in $ \mathbf{t}$. 
   \label{fig:plane}}
 \end{center}
 \end{figure} 
\noindent Using again \cite[Corollary 3.4]{feray2018geometry} and denoting by $ \widetilde{S}_j^g$ the sum of $j$  i.i.d.\ random variables with law $ \mathrm{Geom}(1/2)$ and $ \widetilde{S}_j^{r}$ the sum of $j$ i.i.d.\ random variables with Bernoulli law of parameter $1/2$ for $j \geq 1$, so that $ \widetilde{S}_j^{r}$ has binomial distribution of parameters  $ j$ and  $1/2$, we obtain, for all $ k \geq 1$, 
\begin{eqnarray*}
\mathbb{P}( \widetilde{M}_n = k) &=& \mathbb{P}( N^g = k,  N^r =  n-1-k   | N^g + N^r = n-1) \\
&=& \frac{1}{k} \frac{ \mathbb{P}( \widetilde{S}_{2k}^{g} = n-1-k) \mathbb{P}( \widetilde{S}_{n-1-k}^{r} = k-1)}{ \mathbb{P}( \widetilde{S}_{n}^{g} = n-1) } \\
&=& \dfrac{n}{k} \dfrac{\binom{n+k-2}{2k-1}\binom{n-k-1}{k-1}}{\binom{2n-2}{n-1}}.
\end{eqnarray*}
\end{proof}

 \section{Markovian explorations of a rooted tree} \label{sec:peeling}
In this section, we introduce the Markovian explorations of Cayley trees which we will use to prove the symmetry of the law of $G_n$ (Theorem \ref{thm:sym}) and which we shall call ``peeling explorations" by analogy with the peeling process in the theory of random planar maps, see \cite{CurStFlour}. 
Given a Cayley tree $ \mathbf{t}$, an \emph{exploration} of $ \mathbf{t}$ will be a sequence of forests $( \mathbf{f}_0 , \dots, \mathbf{f}_{n-1})$ starting from the forest $ \mathbf{f}_0$ made of $n$ isolated vertices and ending at $ \mathbf{f}_{n-1} = \mathbf{t}$, such that we pass from $ \mathbf{f}_i$ to $ \mathbf{f}_{i+1}$ by adding one edge. In our setup, the edge to add at each step will be the edge linking the vertex \emph{peeled} at step $i$ to its parent. To do so, we thus need to root our tree $ \mathbf{t}$ by distinguishing a vertex and orienting all edges towards it. That is why in the rest of the paper (and contrary to the previous section) we will always see a Cayley tree $ \mathbf{t}$ as rooted at the vertex of index $n$. \\
Specifically,  for $0 \leq k \leq n-1$, we let $\mathcal{F}^{*}(n,k)$ be the set of (unordered) rooted forests on labeled vertices $\{ 1, \dots, n \} $ with exactly $k$ edges where one of the trees is distinguished: in the following, the distinguished tree and its vertices are seen as being \emph{blue} whereas the other trees and vertices are \emph{white}, see Figure \ref{fig:peeling}. 
We denote by  $\mathcal{F}_n^{*} = \cup_{k=0}^{n-1} \mathcal{F}^{*}(n,k)$ the set of such rooted forests on $\{ 1, \dots, n \}$. Below, the blue component will correspond to the component of the ``real" root of the tree in our peeling exploration of rooted Cayley tree. Given a forest $ \mathbf{f} \in \mathcal{F}^{*}(n,k)$, we say that a rooted Cayley tree $ \mathbf{t}$ \emph{contains} $ \mathbf{f}$ if $ \mathbf{t}$ can be obtained from $ \mathbf{f}$ by adding edges between each white root of a tree and another compatible vertex i.e.\ a vertex contained in a different tree of $ \mathbf{f}$.  Moreover, if $v_1$ is the root of a white tree and $v_2$ is a vertex of another tree, we denote by $\mathbf{f}_{v_1 \to v_2}$ the forest  obtained from $ \mathbf{f}$ by adding an edge from $v_1$ to $v_2$ and coloring the resulting component with the color of $v_2$. Assume now that we have a function $\mathfrak{a}$ called the \emph{peeling algorithm} which associates $ \mathbf{f} \in \mathcal{F}^{*}_n \setminus \mathcal{F}^* (n, n-1)$ with a white root of a tree of the forest $ \mathbf{f}$.\begin{figure}[!h]
 \begin{center}
 \includegraphics[width = 13cm]{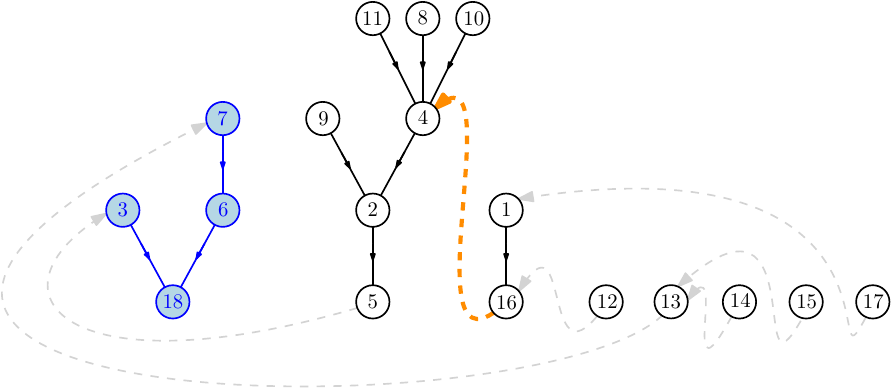}
 \caption{Example of a forest $ \mathbf{F}_{10}^{\mathfrak{a}}$ which can be obtained after $10$ steps of exploration of a given tree $ \mathbf{t}$ of size $18$. The edges which are still unknown are in dashed. If the vertex labeled $ \mathfrak{a}( \mathbf{F}_{10}^{\mathfrak{a}}) = 16$ is the next peeled vertex, then the next edge to be added will be the dashed orange edge. \label{fig:peeling}}
 \end{center}
 \end{figure}
\begin{definition}
Let $\mathbf{t}$ be a Cayley tree with $n$ vertices rooted at the vertex labeled $n$. The peeling exploration of $ \mathbf{t}$ with algorithm $\mathfrak{a}$ is the sequence
\begin{eqnarray*}
\mathbf{F}_{0}^{\mathfrak{a}} \longrightarrow \mathbf{F}_{1}^{\mathfrak{a}} \longrightarrow \ldots \longrightarrow \mathbf{F}_{n-2}^{\mathfrak{a}} \longrightarrow \mathbf{F}_{n-1}^{\mathfrak{a}}
\end{eqnarray*}
obtained by starting from $\mathbf{F}_0^{\mathfrak{a}}$ the unique element of $\mathcal{F}^{*} (n,0) $ with isolated vertices, which are all white except the blue vertex $n$, and at step $i \geq 1$, the forest $\mathbf{F}_{i}^{\mathfrak{a}}$ is obtained from $\mathbf{F}_{i-1}^{\mathfrak{a}}$ and  $\mathfrak{a} (\mathbf{F}_{i-1}^{\mathfrak{a}})$ as follows: if $v$ is the parent of $\mathfrak{a} (\mathbf{F}_{i-1}^{\mathfrak{a}})$ in $ \mathbf{t}$, then we add the edge from $\mathfrak{a} (\mathbf{F}_{i-1}^{\mathfrak{a}})$ to $v$ to $ \mathbf{F}_{i-1}^{ \mathfrak{a}}$ and color the vertices of the resulting component with the color of $v$ to obtain $ \mathbf{F}_{i}^{ \mathfrak{a}} = \left(\mathbf{F}_{i-1}^{ \mathfrak{a}} \right)_{\mathfrak{a} (\mathbf{F}_{i-1}^{\mathfrak{a}}) \to v}.$
\end{definition}
Notice that $\mathbf{F}_{n-1}^{\mathfrak{a}}$ is the tree $ \mathbf{t}$ with blue vertices, regardless of the choice of $\mathfrak{a}$. 
Moreover, when $\mathbf{t}=\mathcal{T}_n$ is a uniform Cayley tree rooted at $n$, then the sequence $(\mathbf{F}_{i}^{\mathfrak{a}})_{0 \leq i \leq n-1}$ is a Markov chain:  
\begin{prop} \label{prop:markovtree} Fix a peeling algorithm $ \mathfrak{a}$. If $\mathcal{T}_n$ is a uniform Cayley tree of size $n$ rooted at $n$, then the exploration $(\mathbf{F}_{i}^{\mathfrak{a}})_{0 \leq i \leq n-1}$ of $ \mathcal{T}_n$ with algorithm $ \mathfrak{a}$ is a Markov chain  whose probability transitions are described as follows: for $i \geq 0$, conditionally on $\mathbf{F}_{i}^{\mathfrak{a}}$ and on $\mathfrak{a} (\mathbf{F}_{i}^{\mathfrak{a}})$, we denote by $m \geq 1$ the size of the connected component of $\mathfrak{a} (\mathbf{F}_{i}^{\mathfrak{a}})$ and by $\ell \geq 1$ the number of blue vertices in $ \mathbf{F}_{i}^{ \mathfrak{a}}$. \begin{itemize}
 \item For every blue vertex $v$, the parent of $ \mathfrak{a} (\mathbf{F}_{i}^{\mathfrak{a}})$ is $v$ with probability $(\ell+m)/(\ell n)$ and in that case, we add an edge from $\mathfrak{a} (\mathbf{F}_{i}^{\mathfrak{a}})$ to $v$  and color the resulting component  blue  to obtain $	 \mathbf{F}_{i+1}^{ \mathfrak{a}}$.
 \item For every white vertex $v$ which does not belong to the component of $\mathfrak{a} (\mathbf{F}_{i}^{\mathfrak{a}})$, the parent of $ \mathfrak{a} (\mathbf{F}_{i}^{\mathfrak{a}})$ is $v$ with probability $1/n$ and in that case we add an edge from $\mathfrak{a} (\mathbf{F}_{i}^{\mathfrak{a}})$ to $v$  to obtain $ \mathbf{F}_{i+1}^{ \mathfrak{a}}$.
\end{itemize}
\end{prop}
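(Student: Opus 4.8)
The plan is to exploit the fact that the exploration $(\mathbf{F}_i^{\mathfrak{a}})$ is a \emph{deterministic} functional of $\mathcal{T}_n$, so that the claim is really a statement about the uniform measure on Cayley trees rooted at $n$. First I would observe that, since $\mathfrak{a}$ always picks a white root and that root acquires exactly its parent edge at the step at which it is peeled, every vertex of $\{1,\dots,n-1\}$ is peeled exactly once, namely at the step at which its parent edge is revealed. Consequently, from a single forest $\mathbf{f}=\mathbf{F}_i^{\mathfrak{a}}$ one can reconstruct the whole prefix $\mathbf{F}_0^{\mathfrak{a}},\dots,\mathbf{F}_{i-1}^{\mathfrak{a}}$: start from the trivial forest, apply $\mathfrak{a}$, add the edge from $\mathfrak{a}(\mathbf{F}_0^{\mathfrak{a}})$ to its parent read off $\mathbf{f}$, iterate. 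Making this precise, a short induction on $i$ should show that for any $\mathfrak{a}$-admissible prefix $\mathbf{f}_0\to\cdots\to\mathbf{f}_i$ and any rooted Cayley tree $\mathbf{t}$, one has $\mathbf{F}_j^{\mathfrak{a}}(\mathbf{t})=\mathbf{f}_j$ for all $j\leq i$ if and only if $\mathbf{t}$ contains $\mathbf{f}_i$, and that such prefixes are in bijection with their endpoints. This gives $\sigma(\mathbf{F}_0^{\mathfrak{a}},\dots,\mathbf{F}_i^{\mathfrak{a}})=\sigma(\mathbf{F}_i^{\mathfrak{a}})$, reducing the Markov property to a one-step computation; and since for $a=\mathfrak{a}(\mathbf{f})$ the event $\{\mathbf{F}_{i+1}^{\mathfrak{a}}=\mathbf{f}_{a\to v}\}\cap\{\mathbf{F}_i^{\mathfrak{a}}=\mathbf{f}\}$ coincides with $\{\mathcal{T}_n\text{ contains }\mathbf{f}_{a\to v}\}$, uniformity turns the transition probabilities into ratios $N(\mathbf{f}_{a\to v})/N(\mathbf{f})$, where $N(\mathbf{g})$ is the number of Cayley trees on $\{1,\dots,n\}$ rooted at $n$ containing the rooted forest $\mathbf{g}$.

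The heart of the argument is then a closed formula for $N(\mathbf{g})$, which I would state as: if $\mathbf{g}$ has $r\geq 2$ trees, the blue one of size $\ell$, then $N(\mathbf{g})=\ell\,n^{\,r-2}$. To prove it, contract each tree of $\mathbf{g}$ to a point and write $s_1=\ell, s_2,\dots,s_r$ for the component sizes, node $1$ being blue. A completion of $\mathbf{g}$ into a rooted Cayley tree amounts to choosing a tree $T$ on $\{1,\dots,r\}$ rooted at $1$ together with, for each $i\neq 1$, one of the $s_{\mathrm{parent}_T(i)}$ vertices of the parent component to which the root of component $i$ is attached, so $N(\mathbf{g})=\sum_T \prod_{i\neq 1}s_{\mathrm{parent}_T(i)}$. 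Rewriting the product as $s_1^{\deg_T(1)}\prod_{j\neq 1}s_j^{\deg_T(j)-1}=\big(s_2\cdots s_r\big)^{-1}\prod_{j}s_j^{\deg_T(j)}$ reduces this to the classical degree-refined Cayley identity $\sum_T\prod_j x_j^{\deg_T(j)}=(x_1\cdots x_r)(x_1+\cdots+x_r)^{r-2}$, whence $N(\mathbf{g})=s_1(s_1+\cdots+s_r)^{r-2}=\ell\,n^{\,r-2}$.

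Finally I would substitute $\mathbf{f}=\mathbf{F}_i^{\mathfrak{a}}$, which has $r=n-i\geq 2$ components (blue of size $\ell$) with the peeled vertex $a$ in a white component of size $m$. If $v$ is blue, then $\mathbf{f}_{a\to v}$ has $n-i-1$ components and blue component of size $\ell+m$, so the ratio is $(\ell+m)n^{\,n-i-3}/(\ell\,n^{\,n-i-2})=(\ell+m)/(\ell n)$; if $v$ is white and outside the component of $a$, then $\mathbf{f}_{a\to v}$ has $n-i-1$ components and blue component still of size $\ell$, giving $\ell\,n^{\,n-i-3}/(\ell\,n^{\,n-i-2})=1/n$. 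These are the announced transitions, and summing over the $\ell$ blue vertices and the $n-\ell-m$ admissible white vertices returns $(\ell+m)/n+(n-\ell-m)/n=1$, as it must.

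I expect the genuine difficulty to be essentially bookkeeping rather than ideas: one must make airtight the claim that an $\mathfrak{a}$-admissible prefix is determined by its last forest (so that conditioning on $\mathbf{F}_i^{\mathfrak{a}}$ is the same as conditioning on the full past), handle the degenerate case $r=2$ of the counting formula where $n^{\,r-2}=1$, and be careful with colourings when writing $\mathbf{f}_{a\to v}$. The combinatorial identity invoked is classical (it follows from the Prüfer correspondence, vertex $j$ appearing $\deg_T(j)-1$ times in the Prüfer code), so no real obstacle lies there.
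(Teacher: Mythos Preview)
Your proposal is correct and follows the same overall architecture as the paper: reduce the transition probabilities to a ratio $N(\mathbf{f}_{a\to v})/N(\mathbf{f})$ via uniformity, prove a closed formula $N(\mathbf{g})=\ell\,n^{\,r-2}$ for the number of rooted Cayley trees containing a given forest, and read off the two cases. The final substitution and sanity check are identical to what the paper does.

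The one genuine difference is in how the counting formula is established. The paper (its Lemma~\ref{lem:nbCayley}) argues \`a la Pitman: it introduces the number $N^*(\mathbf{f})$ of \emph{refining sequences} of $\mathbf{f}$, observes that $N^*(\mathbf{f})=(j-1)!\,N(\mathbf{f})$ when $\mathbf{f}$ has $j-1$ white trees, and proves the formula by induction on $j$, summing over the possible first edge added. Your route instead contracts each component to a point and invokes the degree-refined Cayley identity $\sum_T\prod_j x_j^{\deg_T(j)}=(x_1\cdots x_r)(x_1+\cdots+x_r)^{r-2}$ via Pr\"ufer codes. Your argument is shorter and perhaps more transparent if one is willing to quote the weighted Cayley formula; the paper's inductive argument is more self-contained and stays closer to the exploration mechanism itself, which is thematically consistent with the rest of the article. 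You are also more explicit than the paper about why conditioning on $\mathbf{F}_i^{\mathfrak{a}}$ is the same as conditioning on the whole past (the $\sigma$-algebra remark); the paper compresses this into the single sentence ``conditionally on $\mathbf{F}_i^{\mathfrak{a}}$, the tree $\mathcal{T}_n$ is uniform among those which contain $\mathbf{F}_i^{\mathfrak{a}}$''.
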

This proposition is a direct corollary of the following lemma.
\begin{lemma} \label{lem:nbCayley} If $ \mathbf{f} \in \mathcal{F}^{*} (n,k)$ is a forest on $\{ 1, \dots, n\}$ with $k$ edges and $\ell \geq 1$ blue vertices (hence $n-k-1$ white trees and a blue tree), then the number $N( \mathbf{f})$ of rooted Cayley trees containing $ \mathbf{f}$ is equal to $\ell n^{n-k-2}$. 
\end{lemma}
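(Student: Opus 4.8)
The plan is to count rooted Cayley trees containing $\mathbf f$ by a direct combinatorial argument, essentially a generalized Cayley formula for forests. Recall that a rooted Cayley tree containing $\mathbf f$ is obtained by adding, for each of the $n-k-1$ white roots, an edge from that root to a vertex lying in a different tree of $\mathbf f$, in such a way that the result is connected (equivalently, a tree). The key observation is that in the final rooted tree (rooted at $n$), orienting all edges toward the root, each white root $v_1$ gets exactly one outgoing edge $v_1 \to v_2$, and the map sending each white root to the \emph{component of $\mathbf f$ containing its image} must, when contracted, yield a tree on the $n-k$ components in which the blue component is the root. So I would first contract each tree of $\mathbf f$ to a single ``super-vertex'', distinguishing the blue one, which reduces the problem to: count the ways to choose the attaching edges.

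First I would set up the contraction precisely. Let the trees of $\mathbf f$ be $\mathsf t_0$ (blue, with $\ell$ vertices) and $\mathsf t_1, \dots, \mathsf t_{n-k-1}$ (white), with $|\mathsf t_j| = a_j$, so $\sum_{j\geq 1} a_j = n - \ell$. A rooted Cayley tree containing $\mathbf f$ is the data of: (i) a rooted tree structure $\mathcal S$ on the super-vertices $\{0,1,\dots,n-k-1\}$ rooted at $0$ (this records which component each white root points \emph{into}), and (ii) for each white super-vertex $j$, a choice of the actual target vertex inside its parent component in $\mathcal S$. Given $\mathcal S$, the number of choices in (ii) is $\prod_{j=1}^{n-k-1}(\text{size of parent component of } j)$. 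So $N(\mathbf f) = \sum_{\mathcal S} \prod_{j \ge 1} a_{\mathrm{par}_{\mathcal S}(j)}$, where in this product we use the convention $a_0 = \ell$.

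Next I would evaluate this sum. This is exactly the kind of weighted tree enumeration handled by the generalized Cayley / multivariate tree formula: for weights $a_0, \dots, a_{n-k-1}$ on super-vertices with total weight $A = \sum_{j=0}^{n-k-1} a_j = n$, the sum over rooted trees on $\{0,\dots,n-k-1\}$ rooted at $0$ of $\prod_{j\neq 0} a_{\mathrm{par}(j)}$ equals $a_0 \cdot A^{\,(n-k-1)-1} = \ell\, n^{\,n-k-2}$. (Equivalently, one can see it from Pitman's weighted version of Cayley's formula, or prove it by induction on the number of super-vertices by removing a leaf, or via a Prüfer-type bijection where the code records parents with the stated weights.) Plugging in $A = n$ and the number of white super-vertices $n-k-1$ gives $N(\mathbf f) = \ell\, n^{n-k-2}$, as claimed.

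The main obstacle is making the reduction in the first two steps airtight: one must check that the correspondence between (rooted Cayley trees containing $\mathbf f$) and (pairs $(\mathcal S, \text{targets})$) is a genuine bijection — in particular that connectivity of the final graph corresponds exactly to $\mathcal S$ being a tree (not a general functional graph), which is where the ``root the white edge towards vertex $n$'' orientation is used, and that no compatibility constraint is lost. Once that bijection is in hand, the enumeration is the standard weighted Cayley formula and is routine; I would cite or quickly reprove the version I need. As a sanity check, taking $k=0$ gives $\ell = 1$, $n-k-1 = n-1$ white singletons, and $N(\mathbf f) = n^{n-2}$, recovering the number of rooted Cayley trees on $n$ vertices.
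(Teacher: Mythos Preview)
Your proof is correct, but it takes a genuinely different route from the paper's. The paper follows Pitman's argument \cite{pitman1999coalescent}: it introduces the number $N^*(\mathbf f)$ of \emph{refining sequences} (ordered ways to add the missing $n-k-1$ edges one at a time), observes $N^*(\mathbf f)=(n-k-1)!\,N(\mathbf f)$, and then proves the formula by induction on the number of white trees by summing over the first edge added.

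Your argument is instead structural: contract each component to a super-vertex, set up a bijection between rooted Cayley trees containing $\mathbf f$ and pairs (rooted tree $\mathcal S$ on the $n-k$ super-vertices rooted at the blue one, choice of target vertex inside the parent component for each white super-vertex), and then evaluate the resulting weighted sum $\sum_{\mathcal S}\prod_{j\geq 1} a_{\mathrm{par}_{\mathcal S}(j)}$ by the multivariate Cayley formula $a_0\,A^{m-2}$ with $A=\sum a_j=n$ and $m=n-k$. This is cleaner in that it isolates the combinatorial content (the bijection) from the enumeration (a standard identity), and it makes transparent why only $\ell=a_0$ and the total $n$ appear in the answer and not the individual white tree sizes. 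The paper's inductive approach, on the other hand, avoids having to cite or reprove the weighted Cayley identity and stays self-contained; it is also the version that most directly mirrors the dynamics of the peeling exploration used later. Your caveat about checking that $\mathcal S$ is a genuine tree (not merely a functional graph) is exactly the right point to flag, and your sanity check at $k=0$ is correct.
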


\begin{proof} The proof is similar to that of Pitman in \cite[Lemma 1]{pitman1999coalescent}. We introduce the number $N^*(\mathbf{f} )$ of \emph{refining} sequences of $ \mathbf{f}$, that is the number of sequences $(\mathbf{f}_j , \dots , \mathbf{f}_{1})$ where $ \mathbf{f}_j = \mathbf{f}$ and for $i \leq j$, the forest $ \mathbf{f}_i$ has a blue tree and $i-1$ white trees, and $ \mathbf{f}_{i-1}$ can be obtained from $ \mathbf{f}_i$ by adding an edge as above. Note that $j = n-k$ since  $\mathbf{f}$ has $n-k-1$ white trees. Given a fixed target $ \mathbf{f}_1$, any forest $ \mathbf{f}_j$ that contains a $ \mathbf{f}_1$ has $j-1$ fewer edges  than $ \mathbf{f}_1$. Hence there are $(j-1)!$ refining sequences from $ \mathbf{f}_j$ to $ \mathbf{f}_1$  and $N^*( \mathbf{f}_j) = (j-1)! N( \mathbf{f}_j)$. 
Now we prove the result on $N( \mathbf{f})$ by induction over $j-1$, the number of white trees of $ \mathbf{f}$. When $j =1$, then $ \mathbf{f} \in \mathcal{F}^* (n,n-1)$ is simply a blue rooted tree and the result is clear.

Suppose the result holds for some $ j-1 \geq 0$. Let $ \mathbf{f} \in \mathcal{F}^* (n,n-j-1)$ be a forest on $\{ 1, \dots, n\}$ with $n_1 \geq 1$ blue vertices and $j$ white trees $( \mathbf{t}_2, \dots, \mathbf{t}_{j+1})$ of size $(n_2, \dots , n_{j+1})$. Then, if $r_i$ is the root of the white tree $ \mathbf{t}_i$,
\begin{itemize}
\item for each blue vertex $v_1$, by the induction hypothesis, there are $ N^*( \mathbf{f}_{r_i \to v_1}) = (n_1 + n_i) n^{j-2} (j-2)!$  refinements of $ \mathbf{f}_{r_i \to v_1}$ and there are $n_1$ such vertices $v_1$,
\item and for each vertex $v_j$ in a white tree $ \mathbf{t}_j$ with $ i \neq j$,  by the induction hypothesis, there are $ N^*( \mathbf{f}_{r_i \to v_j}) = n_1 n^{j-2} (j-2)!$ refinements of $ \mathbf{f}_{r_i \to v_j}$ and there are $n -(n_i + n_1)$ such vertices $v_j$.
\end{itemize}
In total, the number of refinements of $ \mathbf{f}$ is 
$$ N^*( \mathbf{f}) = \sum_{i = 2}^{j+1} n_1 (n_1 + n_i) n^{j-2} (j-2)! + (n -(n_i + n_1))n_1 n^{j-2} (j-2)! = (j-1)! n_1 n^{j-1}, $$
and we get the desired result.
\end{proof}
Now we can prove Proposition \ref{prop:markovtree}.
\begin{proof}[Proof of Proposition \ref{prop:markovtree}] It suffices to notice that since $ \mathcal{T}_n$ is uniform, for all $i \geq 0$, conditionally on $ \mathbf{F}_i^{\mathfrak{a}}$, the tree $ \mathcal{T}_n$ is a uniform tree among those which contain $ \mathbf{F}_i^{\mathfrak{a}}$. Hence, for every (compatible) vertex $v$, 
\begin{equation*}
\mathbb{P} ( \textbf{F}_{i+1}^{\mathfrak{a}} = ( \textbf{F}_{i}^{\mathfrak{a}})_{\mathfrak{a} ( \textbf{F}_{i}^{\mathfrak{a}}) \to v} | \textbf{F}_{i}^{\mathfrak{a}}, \mathfrak{a} ( \textbf{F}_{i}^{\mathfrak{a}})) = \frac{|\{ \mathbf{t} \mbox{ s.t } \mathbf{t} \text{ contains } ( \textbf{F}_{i}^{\mathfrak{a}})_{\mathfrak{a} ( \textbf{F}_{i}^{\mathfrak{a}}) \to v} \}|}{| \{ \mathbf{t} \mbox{ s.t } \mathbf{t} \text{ contains } \textbf{F}_{i}^{\mathfrak{a}} \} |}.
\end{equation*}
Using Lemma \ref{lem:nbCayley}, we recognize the transition probabilities given in Proposition \ref{prop:markovtree} and obtain the desired result.
\end{proof}
The strength of Proposition \ref{prop:markovtree} is that different peeling algorithms yield different explorations (hence different types of information) of the same underlying tree. Let us illustrate this with several examples of peeling algorithms. 

\paragraph{Pitman's algorithm and $ \mathfrak{a}_{ \mathrm{unif}}$.} A natural choice of algorithm $ \mathfrak{a}$ is, given a forest $ \mathbf{f}$, to choose a root of a white tree of $ \mathbf{f}$ uniformly at random for $ \mathfrak{a}_{ \mathrm{unif}} ( \mathbf{f})$. 
This does not seem to enter our setup since $  \mathfrak{a}_{ \mathrm{unif}}$ is not a deterministic function of $ \mathbf{f}$. However we can imagine that we first condition on the randomness involved in $  \mathfrak{a}_{ \mathrm{unif}}$ making it deterministic. Once $ \mathfrak{a}_{ \mathrm{unif}}$ is fixed, we apply it to a random Cayley tree $ \mathcal{T}_n$, thus independent of the choice of $ \mathfrak{a}_{ \mathrm{unif}}$. The Markov chain obtained in Proposition~\ref{prop:markovtree} to this peeling is reminiscent of (but not identical to) Pitman's construction \cite{pitman1999coalescent} of uniform rooted Cayley trees, which we quickly recall: Start from the forest made of the  $n$ isolated vertices $ \{ 1, \dots , n\}$ and at step $ 1 \leq k \leq n-1$, pick a vertex $V_k$ uniformly at random and then pick a root $R_k$ uniformly at random among the $n-k$ trees which do not contain $V_k$, and add the directed edge from $R_k$ to $V_k$. Note that in Pitman's construction, we first pick a uniform vertex then a uniform compatible root and the ``real" root of the Cayley tree is found out only at the end of the exploration whereas in our peeling exploration with algorithm $ \mathfrak{a}_{ \mathrm{unif}}$, we first fix the root as the vertex labeled $n$ and during the exploration we choose uniformly a root of a white tree whose parent is chosen almost uniformly among the compatible vertices.
\paragraph{Building branches and Aldous-Broder algorithm.}
Let us mention another choice of algorithm which sheds new light on Aldous-Broder's construction \cite{Ald90,broder1989generating} of $ \mathcal{T}_n$ by using a random walk on the complete graph \emph{with loops}. We assume in this paragraph that $ n \geq 2$. Given a forest $ \mathbf{f} \in \mathcal{F}^{*}_n$, let $ \mathfrak{a}_{AB} ( \mathbf{f})$ be the root of the tree which contains the white vertex with the smallest label. For example, starting at $\mathbf{F}_0^{  \mathfrak{a}_{AB}}$, then $ \mathfrak{a}_{AB}(\mathbf{F}_i^{ \mathfrak{a}_{AB}})$ is the root of the component of $1$ until the exploration hits the blue root $n$. Let $T_1^{\mathfrak{a}_{AB}} = \min \{ i \geq 1 \mbox{ s.t. } 1 \mbox{ is a blue vertex in } F_{i-1}^{ \mathfrak{a}_{AB}}\}$ be the length of the first ``branch" built in this exploration. Then, for $k \geq 2$,  
$$ \mathbb{P} (T_1^{\mathfrak{a}_{AB}} = k) =\frac{k}{n} \prod_{i = 2}^{k-1} \left( 1 - \frac{i}{n}\right).$$
Recall now Aldous-Broder's algorithm \cite{Ald90,broder1989generating} on the complete graph on $n$ vertices, that is the graph with vertex set $\{ 1, \dots , n\}$, and all possible edges including the loops $(i,i)$: Consider a random walk $(X_i : i \geq 0)$ starting at $X_0 :=n$ and where the $X_i$'s are i.i.d.\ uniform random variables on $\{ 1, \dots n \}$ for $i \geq 1$. 
We denote by $ \tau_k := \inf \{i : X_i = k\} < \infty$ for any $ k \in \{ 1, \dots , n\}$.
The walk induces the tree $ \mathcal{T}^{AB}$ which consists of the tree on $\{ 1, \dots , n\}$ with  the edges $(X_{\tau_k - 1}, X_{\tau_k})$ for $ 1 \leq k \leq n-1$ (we only take the edges which lead to newly discovered vertices). Consider $T_1^{AB} = \min \{ i \geq 1 \mbox{ s.t. } X_{i} \in \{ X_0 , \dots , X_{i-1} \} \}$ the time of the first repetition of the walk $(X_i : i \geq 0).$ Then by construction,  $\mathbb{P} ( T_1^{AB} = 1) = 1/n$ and for $k \geq 2$, 
\begin{eqnarray*}
\mathbb{P} ( T_1^{AB} = k) = \frac{k}{n} \prod_{i = 1}^{k-1} \left( 1 - \frac{i}{n}\right) = \frac{n-1}{n} \mathbb{P} (T_1^{\mathfrak{a}_{AB}} = k) = \mathbb{P}( T_1^{AB} \neq 1) \cdot \mathbb{P} (T_1^{\mathfrak{a}_{AB}} = k).
\end{eqnarray*}
We deduce that conditionally on $X_1 \neq n$, the law of the length  $T_1^{AB}$ of the first branch is identical to that of the length of the branch linking $n$ and $1$ in $ \mathcal{T}_n$, see \cite{Ald91a}. A similar result holds for the next branches in both constructions. This has been already observed by Camarri and Pitman in the more general context of $p$-trees in \cite[Corollary 3]{CaPi2000}.

\paragraph{Greedy construction.}In the next section, we use a  peeling algorithm (with additional decorations) tailored to the construction of the greedy independent set on $ \mathcal{T}_n$.

\section{Markovian construction of the greedy independent set} \label{sec:sym}

This section is devoted to the proof of Theorem \ref{thm:sym} which use our Markovian exploration of Cayley trees.
\subsection{Markovian construction and its transitions}
Recall from the introduction the greedy algorithm: given a Cayley tree, we inspect its vertices sequentially in a uniform random order and at each step, if the considered vertex is undetermined, we change its status to active and change the status of all its undetermined neighbours to blocked. 
By Lemma \ref{lem:invariance}, we have an invariance property of $ \mathcal{T}_n$ under independent uniform relabeling. Hence we can and will directly use the labels of $ \mathcal{T}_n$  to define the order of exploration of the greedy independent set on $ \mathcal{T}_n$.  \\
Recall that we rooted our tree at the vertex $n$. The idea here is to link the greedy construction to a peeling exploration of $ \mathcal{T}_n$: we explore and inspect at each step a vertex and update its status and possibly \emph{that of its parent in the tree but not these of its children}. In particular, the root $n$ of the tree is possibly the last vertex to be considered (when it is not previously blocked). \\
More precisely, given the Cayley tree $ \mathcal{T}_n$ rooted at $n$, we divide its vertices into three statuses (undetermined, active or blocked) as before and set 
$$ \widetilde{\mathcal{U}}_0^n = \{ 1, \dots , n \} , \qquad  \widetilde{\mathcal{A}}_0^n = \widetilde{\mathcal{B}}_0^n = \emptyset.$$ 
Then inductively at step $i$, we inspect $v_i$ the \emph{undetermined} vertex with the smallest label  and we let $w_i$ be its parent in $ \mathcal{T}_n$ (unless $v_i = n$). Then we update their statuses as follows:
\begin{itemize}
\item If $v_i = n$, then the vertex $n$ becomes active i.e.\ $ \widetilde{\mathcal{U}}_{i+1}^n = \emptyset$ and $ \widetilde{\mathcal{A}}_{i+1}^n = \widetilde{\mathcal{A}}_i^n \cup \{v_i\}$
\item If $w_i$ is undetermined, then $v_i$ becomes active (it is taken in the greedy independent set) and $w_i$ becomes blocked i.e.\ $ \widetilde{\mathcal{U}}_{i+1}^n = \widetilde{\mathcal{U}}_{i}^n \setminus\{ v_i ,w_i\}$ and $ \widetilde{\mathcal{A}}_{i+1}^n = \widetilde{\mathcal{A}}_{i}^n \cup \{ v_i \}$ and  $ \widetilde{\mathcal{B}}_{i+1}^n = \widetilde{\mathcal{B}}_{i}^n \cup \{ w_i \}$.
\item If $w_i$ is blocked (resp.\ active), then $v_i$ becomes active (resp.\ blocked), that is $ \widetilde{\mathcal{U}}_{i+1}^n = \widetilde{\mathcal{U}}_{i}^n \setminus\{ v_i \}$ and $ \widetilde{\mathcal{A}}_{i+1}^n = \widetilde{\mathcal{A}}_{i}^n \cup \{ v_i \}$ (resp.\ $ \widetilde{\mathcal{B}}_{i+1}^n = \widetilde{\mathcal{B}}_{i}^n \cup \{ v_i \}$).
\end{itemize} 
 \begin{figure}[!h]
 \begin{center}
 \includegraphics[width=13cm]{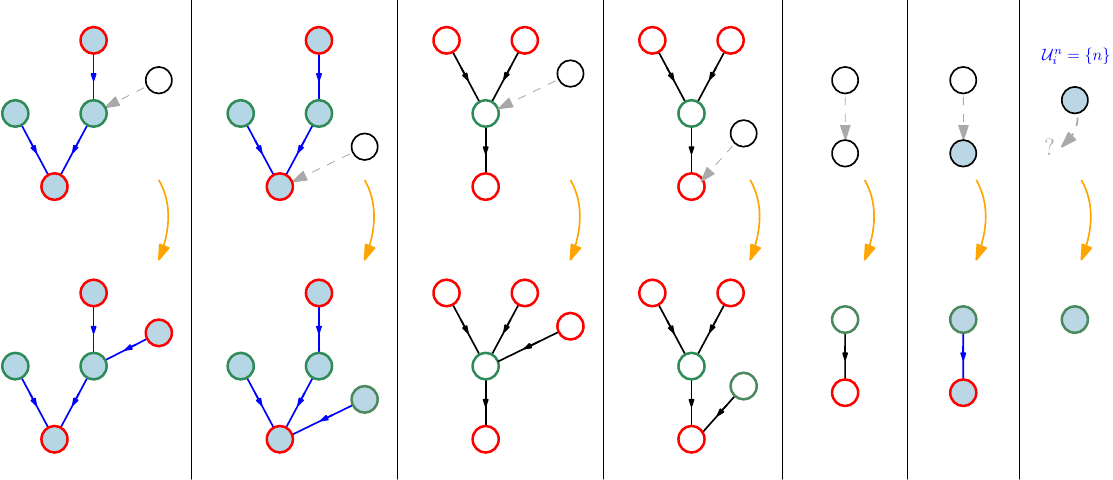}
 \caption{Illustration of the possible transitions in the peeling exploration with $ \mathfrak{a}_{ \mathrm{Greedy}}$ together with the updating of the status of the vertices. The interior color represents the color of the vertices (blue or white) in the peeling exploration and the boundary color represents the status of the vertices in the greedy construction of the independent set: black for the undetermined vertices, green for the active vertices and red for the blocked vertices. The new edge which we explore (between $v_i$ and $w_i$) is in dotted gray.   When $v_i = n$, there is no peeling step but $v_i$ is the last undetermined vertex and it becomes active. 
 \label{fig:transitions}}
 \end{center}
 \end{figure} 
As before, at each step $i$, the set $ \widetilde{\mathcal{A}}_i^n$ is an independent set and we stop the process when all vertices are active or blocked i.e.\ at time $ \theta_n = \inf \{ i \geq 0, \ \widetilde{\mathcal{U}}_i^n = \emptyset\}$. It is then easy to check that the random subset $ \widetilde{ \mathcal{A}}_{ \theta_n}^n$ is equal to the greedy independent set $ \mathcal{A}_{\tau}$ defined in the introduction (if we inspect vertices according to their labels in the tree). 
The advantage of the above construction over the one of the introduction where we update the status of all neighbours of the inspected vertex is that it can be seen as a Markovian exploration of $ \mathcal{T}_n$ in the sense of the preceding section (where we additionally keep track of the status of the vertices). Indeed, it is equivalent to the peeling of $ \mathcal{T}_n$ started at the forest $ \mathbf{F}_{0}^{ \mathfrak{a}_{ \mathrm{Greedy}}}$ made of an undetermined blue vertex with label $n$ and $n -1$ isolated white undetermined vertices and where at step $i+1$, we peel the vertex $$ v_{i} = \min\  \widetilde{\mathcal{U}}_i^n := \mathfrak{a}_{\mathrm{Greedy}}( \mathbf{F}_i^{ \mathfrak{a}_{\mathrm{Greedy}}})$$ 
the undetermined vertex with the smallest label and update the status of the vertices as above, see Figure \ref{fig:transitions}.
Recall the two possible colors for the vertices during a peeling exploration: the vertices which are in the connected component of the vertex labeled $n$ which is the ``real root" of the tree are \emph{blue} whereas the other vertices are \emph{white}. We make the following remarks which are easily checked by induction: with the possible exception of the vertex $n$, the undetermined vertices are white whereas the active or blocked vertices can be white or blue. Moreover at each step $i < \theta_n$, the forest $ \mathbf{F}_i^{ \mathfrak{a}_{\mathrm{Greedy}}} $ is made of white isolated undetermined vertices, white trees of size at least $2$  rooted at a blocked vertex, and a blue tree, see Figure \ref{fig:expgreedy}. 
 \begin{figure}[!h]
 \begin{center}
 \includegraphics[width=10cm]{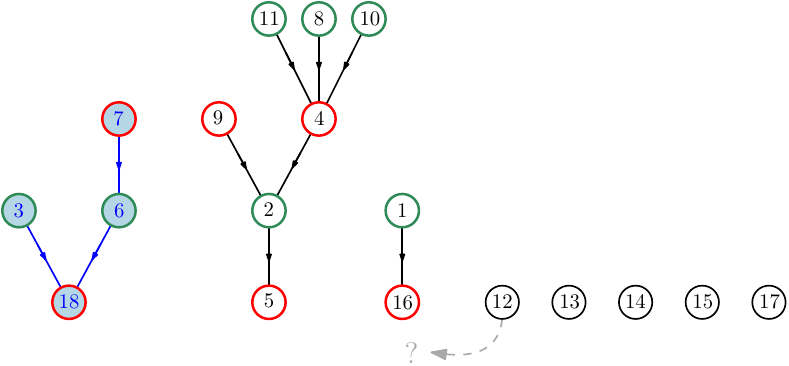}
 \caption{Illustration of a possible value of $ \mathbf{F}_{10}^{\mathfrak{a}_{ \mathrm{Greedy}}}$. As in Figure \ref{fig:transitions}, the interior color represents the color of the vertices in the peeling exploration and the boundary color represents the status of the vertices in the greedy construction of the independent set: black for the undetermined vertices, green for the active vertices and red for the blocked vertices. The new vertex to peel has label $12$ here. \label{fig:expgreedy}}
 \end{center}
 \end{figure}

A small caveat is that a priori the peeling exploration with algorithm $ \mathfrak{a}_{ \mathrm{Greedy}}$ is not defined after time $ \theta_n$: Generically, at that time, although the status in the greedy independent set of all vertices is known, the whole geometric structure of $ \mathcal{T}_n$ is not completely revealed since there are many white vertices in  $ \mathbf{F}_{ \theta_n}^{ \mathfrak{a}_{\mathrm{Greedy}}} $ (we never peel the blocked roots of white trees). For the sake of completness we may continue the peeling exploration (for example by peeling the white (blocked) root of a tree with the smallest label), but we shall not use it.

We denote by $(U_i^n, A_i^n , B_i^n)_{i \geq 0}$ the number of undetermined, active and blocked vertices in the process on a uniform Cayley tree $ \mathcal{T}_n$ with the greedy peeling algorithm $ \mathfrak{a}_{ \mathrm{Greedy}}$ i.e.\ the size of $ \widetilde{\mathcal{U}}_{i}^n$, $ \widetilde{\mathcal{A}}_{i}^n$ and $ \widetilde{\mathcal{B}}_{i}^n$. For the active and blocked vertices, we distinguish the number of white vertices $A_i^{n,w}$ and $B_i^{n,w}$ from the number of blue vertices $A_i^{n,b}$ and $B_i^{n,b}.$ Then with the notation of Theorem \ref{thm:sym}, we have $G_n = A_{ \theta_n}^n$ and using a decorated version of Proposition \ref{prop:markovtree}, we see that 
$$(U^n_i,A^{n,w}_i,B^{n,w}_i,A^{n,b}_i,B^{n,b}_i : 0 \leq i \leq \theta_n)$$
is a Markov chain. To describe its probability transitions, we introduce $ \Delta	X_i := X_{i+1} - X_i$ the increment of a random process $(X_j : j \geq 0)$ at time $i \geq 0$. Suppose that $ \widetilde{\mathcal{U}}_i^n$ is not empty. On the one hand, when  $n \in \widetilde{\mathcal{U}}_i^n$, then $n$ is the only blue vertex in $ \mathbf{F}_i^{ \mathfrak{a}_{\mathrm{Greedy}}}$, hence $A_i^{n,b} = B_i^{n,b} = 0$.
On the other hand, when $n \notin \widetilde{\mathcal{U}}_i^n$, then $A^{n,b}_{i} \geq 1$ and $B^{n,b}_i \geq 1.$
Note that as long as  $ \widetilde{\mathcal{U}}_i^n$ is neither $\{ n\}$ nor  the emptyset $ \emptyset$, the vertex $ \mathfrak{a}_{\mathrm{Greedy}} ( \mathbf{F}_{i}^{ \mathfrak{a}_{\mathrm{Greedy}}})$ is a white isolated vertex in $\mathbf{F}_{i}^{ \mathfrak{a}_{\mathrm{Greedy}}}$. Therefore, conditionally on $(U_i^n , A_i^{n,w}, B_i^{n,w} , A_i^{n,b} , B_i^{n,b})$, the transitions of the increments of $((U_i^n , A_i^{n,w}, B_i^{n,w} , A_i^{n,b} , B_i^{n,b}) : {i \geq 0})$ are given by:

If $ A_i^{n,b} = B_i^{n,b} = 0$  and $U_i^n = 1$ i.e.\ if $ \widetilde{\mathcal{U}}_{i}^n = \{ n\}$ then almost surely,  
$$ \Delta (U^n_i,A^{n,w}_i,B^{n,w}_i,A^{n,b}_i,B^{n,b}_i) = (-1, 0,0,+1,0). $$
 \begin{eqnarray}\label{table}
\begin{array}{|l||c|c|c|c|}
\multicolumn{5}{c}{ \mbox{If } A_i^{n,b} = B_i^{n,b} = 0 \mbox{ and } U_i^n \geq 2,} \\ [0.3cm]
\hline
\tiny{\mbox{with prob.}} & \scriptstyle{\frac{U_i^n -2}{n}} & \scriptstyle{\frac{A_i^{n,w}}{n}}& \scriptstyle{\frac{B_i^{n,w}}{n} }& \scriptstyle{\frac{2}{n}} \\
  \hline \hline
 \displaystyle \Delta U_i^n & -2 & -1& -1& -2 \\ \hline
 \displaystyle \Delta A^{n,w}_i& +1& 0& +1& 0\\ \hline
\displaystyle \Delta  B^{n,w}_i & +1& +1& 0& 0\\ \hline
\displaystyle \Delta A^{n,b}_i & 0& 0& 0& +1 \\ \hline
\displaystyle \Delta  B^{n,b}_i& 0&0& 0& +1\\ \hline
\end{array}
\quad
\begin{array}{|l||c|c|c|c|c|}
\multicolumn{6}{c}{ \mbox{If } A_i^{n,b} \geq 1,  B_i^{n,b} \geq 1 \mbox{ and } U_i^n \geq 1,} \\ [0.1cm]
\hline
\tiny{\mbox{with prob.}}&  \scriptstyle{ \frac{U_i^n -1}{n} } &  \scriptstyle{\frac{A_i^{n,w}}{n} }&  \scriptstyle{\frac{B_i^{n,w}}{n}} & \scriptstyle{ \frac{A_i^{n,b} ( A_i^{n,b} + B_i^{n,b} + 1)}{( A_i^{n,b} + B_i^{n,b})n}} & \scriptstyle{ \frac{B_i^{n,b} ( A_i^{n,b} + B_i^{n,b} + 1)}{( A_i^{n,b} + B_i^{n,b})n}}\\
  \hline \hline
\displaystyle \Delta U_i^n & -2 & -1& -1& -1 & -1\\ \hline
\displaystyle \Delta A^{n,w}_i& +1& 0& +1& 0& 0\\ \hline
\displaystyle \Delta  B^{n,w}_i& +1& +1& 0& 0& 0\\ \hline
\displaystyle \Delta A^{n,b}_i& 0& 0& 0& 0 & +1\\ \hline
\displaystyle \Delta  B^{n,b}_i& 0& 0& 0& +1& 0\\ \hline
\end{array}
\end{eqnarray}

\subsection{Symmetry of the greedy independent set} \label{sec:fluctuations}

We now prove Theorem \ref{thm:sym} which states the symmetry between the law of $A_{ \theta_n}^n$ the size of the maximal independent set obtained by the greedy algorithm and $ B_{ \theta_n}^n$ that of its complement. The main observation is the following lemma.

\begin{lemma} \label{lemma:sym} For $j \geq 0$, the laws of  $(A_i^{n,w}, A_i^{n,b})_{0 \leq i \leq j+1}$ and $ (B_i^{n,w} , B_i^{n,b})_{0 \leq i \leq j+1}$ are the same on the event $\ \widetilde{\mathcal{U}}_i^n \neq \{ n \}$ for all $i \leq j$.
\end{lemma}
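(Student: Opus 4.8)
The plan is to prove the stronger statement that the full increment process enjoys a symmetry swapping the roles of "active" and "blocked" vertices, as long as we have not yet peeled the root. Concretely, I would look at the transition table \eqref{table} and observe that exchanging simultaneously $A^{n,w} \leftrightarrow B^{n,w}$ and $A^{n,b} \leftrightarrow B^{n,b}$ maps the set of allowed transitions to itself, with matching probabilities. Indeed, in the left block of \eqref{table} (the regime $A_i^{n,b}=B_i^{n,b}=0$, $U_i^n\ge 2$), the column with probability $A_i^{n,w}/n$ produces $\Delta B^{n,w}=+1$ while the column with probability $B_i^{n,w}/n$ produces $\Delta A^{n,w}=+1$; swapping the two columns and swapping the labels $A^{n,w}\leftrightarrow B^{n,w}$ leaves the block invariant, and the columns with probabilities $(U_i^n-2)/n$ and $2/n$ are already symmetric in this exchange. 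In the right block (the regime $A_i^{n,b},B_i^{n,b}\ge 1$), the same swap of the two "white" columns together with the swap of the last two columns (which interchanges the probabilities $\tfrac{A_i^{n,b}(A_i^{n,b}+B_i^{n,b}+1)}{(A_i^{n,b}+B_i^{n,b})n}$ and $\tfrac{B_i^{n,b}(A_i^{n,b}+B_i^{n,b}+1)}{(A_i^{n,b}+B_i^{n,b})n}$ under $A^{n,b}\leftrightarrow B^{n,b}$) again leaves the block invariant. Crucially, the starting point $\mathbf{F}_0^{\mathfrak{a}_{\mathrm{Greedy}}}$ has $A_0^{n,w}=B_0^{n,w}=A_0^{n,b}=B_0^{n,b}=0$, so it is a fixed point of the swap.

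With this observation in hand, I would argue as follows. Condition on the event that $\tilde{\mathcal{U}}_i^n\ne\{n\}$ for all $i\le j$; on this event every step $i\le j$ is governed by one of the two blocks of \eqref{table} (the degenerate transition $(-1,0,0,+1,0)$ occurs only when $\tilde{\mathcal{U}}_i^n=\{n\}$, which we have excluded). Since the initial state is symmetric under the involution $\iota:(u,a_w,b_w,a_b,b_b)\mapsto(u,b_w,a_w,b_b,a_b)$ and the one-step transition kernel commutes with $\iota$ wherever it is defined by \eqref{table}, an immediate induction on $i$ shows that the law of the trajectory $(U_i^n,A_i^{n,w},B_i^{n,w},A_i^{n,b},B_i^{n,b})_{0\le i\le j+1}$ restricted to this event is invariant under $\iota$ applied coordinatewise to the whole trajectory. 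Projecting onto the $(A^{n,w},A^{n,b})$ coordinates on one side and the $(B^{n,w},B^{n,b})$ coordinates on the other gives exactly the claimed equality in law of $(A_i^{n,w},A_i^{n,b})_{0\le i\le j+1}$ and $(B_i^{n,w},B_i^{n,b})_{0\le i\le j+1}$.

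The main point requiring care is the bookkeeping at the boundary between the two regimes: one must check that the event $\{\tilde{\mathcal{U}}_i^n\ne\{n\}\ \text{for all}\ i\le j\}$ is itself $\iota$-invariant (it depends only on $U^n$, which is untouched by $\iota$), and that no transition in \eqref{table} can move the chain from the right regime back into the left regime — once $A^{n,b}\ge 1$ and $B^{n,b}\ge 1$, they stay so — so that the two blocks are not "mixed" in a way that would break the involution. I expect the only genuine subtlety is making the induction statement precise enough to carry the joint law of the entire path (not just the one-dimensional marginals), and confirming that the excluded event $\{\tilde{\mathcal{U}}_i^n=\{n\}\}$ — where the non-symmetric transition $(-1,0,0,+1,0)$ lives — is exactly the event removed in the hypothesis, so that conditioning on its complement up to time $j$ leaves us in a purely symmetric dynamics. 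Everything else is the routine verification that each column of \eqref{table} has an $\iota$-partner with equal weight, which the discussion above already lays out.
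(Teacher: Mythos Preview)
Your proposal is correct and is exactly the argument the paper gives, only spelled out in full detail: the paper's proof is the one-line observation that the roles of $(A_i^{n,w},A_i^{n,b})$ and $(B_i^{n,w},B_i^{n,b})$ are exchangeable in the transition table \eqref{table} whenever $\tilde{\mathcal{U}}_i^n\neq\{n\}$, which is precisely your involution $\iota$ together with the symmetric initial condition and induction. Your additional checks (that the conditioning event depends only on $U^n$, and that the chain never leaves the right regime once it enters it) are correct and make the argument watertight.
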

\begin{proof} 
Notice then that the role of $(A_i^{n,w}, A_i^{n,b})$ and that of $ (B_i^{n,w} , B_i^{n,b})$ are exchangeable in the probability transitions \eqref{table} as long as $ \widetilde{\mathcal{U}}_i^n \neq \{ n\}$. The lemma follows. \end{proof}

Thanks to this lemma, we notice that conditionally on  $ \widetilde{\mathcal{U}}_{i}^n \neq \{n \}$ for all $0 \leq i \leq \theta_n$, then the size $A_{\theta_n}^n$ of the greedy independent set has the same law as that its complement $B_{\theta_n}^n$. Otherwise, if there exists $i \geq 0$ such that $\widetilde{\mathcal{U}}_{i}^n = \{n \}$, then this $i$ corresponds to $\theta_n - 1$ and  in that case, $A_{\theta_n-1}^n$ and $B_{\theta_n-1}^n$ have the same law and $A_{\theta_n}^n = G_n = A_{\theta_n-1}^n +1$ and $B_{\theta_n}^n = B_{\theta_n-1}^n$. Hence, the random variable $ \mathcal{E}_n$ in Theorem \ref{thm:sym} is the indicator function of the event $ \{ \exists i \geq 0, \ \widetilde{\mathcal{U}}_{i}^n = \{n \} \}$ and to prove Theorem \ref{thm:sym}, it only remains to show that 
$$\mathbb{P} ( \mathcal{E}_n = 1) = \mathbb{P} ( \exists i, \ \widetilde{\mathcal{U}}_{i}^n = \{n \} ) \xrightarrow[n\to\infty]{\ }  1/4.$$

\noindent To compute the above probability and also obtain the fluctuations of $G_n$, we study the \emph{fluid limit} of our system (see \cite[Chapter 11]{ethier2009markov}). The transition probabilities \eqref{table} in both cases point to the idea of gathering blue and white vertices and focusing only on their status. Indeed,  the Markov chain  $(U_i^n , A_i^{n}, B_i^{n} : i \geq 0)$ has bounded increments and for $i < \theta_n$, 
\begin{equation} \label{eq:deff}
 \left| \mathbb{E}  \left[ \begin{pmatrix} \Delta U_i^n \\ \Delta A_i^n \\ \Delta B_i^n \end{pmatrix} \middle| \begin{pmatrix} U_i^n \\  A_i^n \\ B_i^n \end{pmatrix} \right]  - F \left(\frac{U_i^n}{n}, \frac{A_i^n}{n}, \frac{B_i^n}{n}\right) \right|  \leq \frac{3}{n}, \quad \mbox{where } F (x,y,z) = \begin{pmatrix} -2x -y-z \\ x+z \\ x+y \end{pmatrix}.
\end{equation}
\noindent This suggests to study the deterministic system which is solution of the following equations:
\begin{equation*}
\left\lbrace 
\begin{array}{ll}
u'(t) = -2 u(t) -a(t)-b(t), \qquad \qquad & u(0) = 1, \\
a'(t) = u(t) + b(t), & a(0) = 0, \\
b'(t) = u(t) + a(t), & b(0) = 0.
\end{array}\right.
\end{equation*}
The solution is given by 
\begin{equation*}
\left\lbrace 
\begin{array}{l}
u(t) = 2 e^{-t}- 1, \\
a(t) = b(t) = 1-e^{-t}.
\end{array}\right.
\end{equation*}
In particular $t^* = \min \{ t \geq 0, \ u(t) = 0 \} = \ln (2)$ and $a (t^* ) =1/2$. We can now apply \cite[Theorem 2.1 p456]{ethier2009markov} to the process $ X_n (t) = 1/n  \cdot (U^n_{ \lfloor tn \rfloor}, A^n_{ \lfloor tn \rfloor},B^n_{ \lfloor tn \rfloor})$ which starts at $X_n (0) = (1, 0, 0)$ with, using the notation of \cite{ethier2009markov}, the function $F$ given by \eqref{eq:deff} and 
$$\left\lbrace 
\begin{array}{l}
 \beta_{(-2,1,1)} (x,y,z) = x, \\
 \beta_{(-1,0,1)} (x,y,z) = y, \\
\beta_{(-1,1,0)} (x,y,z) = z.
\end{array}\right. $$
We obtain that at least for all $t<t^*$, for any $ \varepsilon > 0$, 
\begin{equation*}
\mathbb{P} \left( \sup_{0 \leq s \leq t} \left|  \left(\frac{U_{ \lfloor sn \rfloor}^n}{n} , \frac{A^{n}_{ \lfloor sn \rfloor}}{n}, \frac{B^{n}_{ \lfloor sn \rfloor}}{n}\right) - (u(s),a(s),b(s)) \right| > \varepsilon \right) \xrightarrow[n\to\infty]{\ } 0.
\end{equation*}
Since $u'(t^*) < 0$, we can apply \cite[Theorem 4.1 p464]{ethier2009markov} which states that the stopping time of our Markov chain also concentrates around its expected value and has Gaussian fluctuations around it:
\begin{equation}\label{eq:cvtn}
\sqrt{n} \left( \frac{\theta_n}{n} - t^*\right) \xrightarrow[n\to\infty]{(d)}\mathcal{N} (0, 3/4). 
\end{equation}
Lastly, notice that $ \mathcal{E}_n = 1$ when the vertex $n$ stays undetermined at each step $1 \leq i \leq \theta_n- 1$, that is, with probability $1 - 2/n $ at each step, and by the the previous equation $(1 - 2/n)^{ \theta_n}$ converges in probability towards $e^{-2 t^*}$ and is bounded by $1$. Hence we obtain $$ \mathbb{P} ( \mathcal{E}_n = 1) = \mathbb{E} \left[ \left( 1 - \frac{2}{n}\right)^{\theta_n - 1} \right] \xrightarrow[n\to\infty]{ \ } \mathrm{e}^{-2 t^*} = \frac{1}{4}, $$
which concludes the proof of Theorem \ref{thm:sym}. 
%
%

 \bibliographystyle{siam}
\bibliography{/Users/contat/Dropbox/Articles/biblio}

\end{document}